\documentclass[12pt,a4paper]{article}

 \frenchspacing                          
 \sloppy                                 

 \usepackage{ifpdf}

 \usepackage{latexsym}                   
 \usepackage{epsfig}                     
 \usepackage{graphicx}
 \usepackage{amssymb, amsmath, amsthm}   
 \usepackage{enumerate}                  
 \usepackage{mathrsfs}                   
 \usepackage[plain]{algorithm}
 \usepackage{geometry}                                                                          
 \usepackage{verbatim}                                                                           

 \usepackage[round]{natbib}
 \usepackage{setspace}

 \geometry{top=35mm, bottom=30mm, left=30mm, right=30mm}

 \newtheorem{thm}{Theorem}[section]

 \newtheorem{cor}[thm]{Corollary}

 \theoremstyle{definition}
 
 \theoremstyle{definition}
 \newtheorem{rem}[thm]{Remark}
 \theoremstyle{definition}

 \newcommand{\bE}{{\mathbb E}}
 
 \newcommand{\bN}{{\mathbb N}}
 \newcommand{\bP}{{\mathbb P}}

 \newcommand{\cT}{{\mathcal T}}

 \newcommand {\e}      {\mbox{\bf e}}


 \begin{document}
%
%

 \thispagestyle{plain}
   \title{Lineage-through-time plots of birth-death processes}
  
   \author{Tanja Gernhard, Dennis Wong \\ 
     \date{}
    {\small Department of Mathematics, Kombinatorische Geometrie (M9), TU M\"{u}nchen } \\ 
    {\small Bolzmannstr. 3, 85747 Garching, Germany} \\ 
    {\small Phone +49 89 289 16882, gernhard@ma.tum.de}\\
   }
   \maketitle


 \doublespacing

 \begin{abstract}
We calculate the density and expectation for the number of lineages in a reconstructed tree with $n$ extant species.
This is done with conditioning on the age of the tree as well as with assuming a uniform prior for the age of the tree.
 \end{abstract}


\section{Introduction}
There are a variety of methods to extract information relevant to macro-evolutionary process from phylogenies (e.g. imbalance [e.g. Heard 1992]; Gamma [e.g. Pybus and Harvey 2001]). One popular approach are lineages through time plots (LTT plots). An LTT plot is a plot of lineage accumulation through time translated from a dated phylogeny (Nee et al. 1992). An LTT plot can be used to test the plausibility of a model of macroevolution for any particular clade, the LTT plot for the clade of interest can be compared to an expectation generated from a model (for a review of such models see Mooers et al. 2007 [book chapter], or Hartmann et al. in press). The expected shape can be obtained either through simulation or analytical approaches. Simulations are a simple and therefore attractive approach to developing models of macroevolution, but their use can be trecherous (see Hartmann et al) for an illuminating discussion.
Analytical approaches offer a computational advantage over simulation, but even simple models are quite challenging to analyze.
Here, we add to the knowledge base of analytical approaches with respect to LTT plots.

We consider constant rate birth and death processes \citep{Feller1968}. The birth rate is $\lambda$, the death rate is $\mu$. We define $\rho := \mu / \lambda$ and $\delta := \lambda - \mu$. Constant rate birth and death processes are a popular null model ...
bla bla bla

Birth and death process is conditioned such that we obtain $n$ species today. A tree with both extinct and extant species is a complete tree, while a reconstructed tree is the complete tree where all lineages are removed.  

We will need the following functions as defined in \citet{Nee1994},
\begin{eqnarray}
P(t) &=& \frac{\lambda-\mu}{\lambda-\mu e^{-(\lambda-\mu)t}}, \label{EqnPt} \\
u(t) &=& \lambda \frac{1-e^{-(\lambda-\mu)t}}{{\lambda-\mu e^{-(\lambda-\mu)t}}}. \label{Eqnut}
\end{eqnarray}

\section{LTT plots for trees of known age} \label{SecLTT}

In a lineage-through-time (LTT) plot, we plot the time vs. the number of species at that time.
For a reconstructed tree, in \citet{Nee1994} the expected LTT plot is given analytical after a time $t$.

However, when analyzing the data, we have trees on a given number of species, $n$. The aim of this section is to calculate the density and expectation for the number of species at time $\sigma t$, $\sigma \in \{0,1\}$ in a reconstructed tree at time $t$ after origin. We call this random variable $M_{\sigma,t}$. We condition $M_{\sigma,t}$ on having $M_{1,t}=n$, i.e. having $n$ species today.

\begin{thm} \label{ThmPmgivenn}
Let today be time $t$ and assume we have $n$ species today.
The probability that at time $\sigma t$ we have $m$ species in the reconstructed tree is
\begin{equation}
\bP[M_{\sigma,t}=m|M_{1,t}=n] = \left\{
\begin{array}{ll}
     {n-1 \choose m-1} \frac{ f(\sigma,t, \rho, \delta)^{m-1}}{\left(1+f(\sigma,t,\rho, \delta) \right)^{n-1}}  & \hbox{if $m \leq n$} \\
    0 & \hbox{else} \\
\end{array}
\right.
\end{equation}
with
 $f(\sigma,t,\rho,\delta) =  (1- \rho)  \frac{(1-e^{- \sigma \delta t})e^{- (1-\sigma)  \delta t}}{(1-e^{-(1-\sigma)\delta t})(1 - \rho e^{-\delta t}) }$.
\end{thm}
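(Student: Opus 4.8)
The plan is to compute the conditional probability $\bP[M_{\sigma,t}=m \mid M_{1,t}=n]$ by the definition $\bP[M_{\sigma,t}=m \mid M_{1,t}=n] = \bP[M_{\sigma,t}=m, M_{1,t}=n] / \bP[M_{1,t}=n]$, so the real work is to understand the joint distribution of the number of reconstructed lineages at two times, $\sigma t$ and $t$. I will think of the process forward in time: start from the origin, evolve to time $\sigma t$ where there are $m$ reconstructed lineages, and then evolve each of those $m$ lineages independently from time $\sigma t$ to time $t$.

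Let me sketch the key steps in order. First I would recall from \citet{Nee1994} the basic building blocks: starting from a single lineage at time $0$, the probability that it has exactly one surviving (reconstructed) descendant lineage at time $s$ is governed by $P(s)$ from (\ref{EqnPt}), and the number of reconstructed descendant lineages at time $s$ follows a geometric-type law with parameter built from $P(s)$ and $u(s)$. Second, I would write the probability that a single ancestral lineage present at time $\sigma t$ leaves exactly $k$ reconstructed descendants at the present time $t$; summing a product of such independent contributions over the $m$ lineages at time $\sigma t$, subject to the total being $n$, gives the joint probability. The combinatorial bookkeeping of distributing $n$ present-day species among $m$ ancestors naturally produces a binomial coefficient, which is the source of the $\binom{n-1}{m-1}$ in the statement. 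Third, I would divide by $\bP[M_{1,t}=n]$, which is the marginal law of the number of reconstructed lineages at time $t$ starting from the origin; this is again a known geometric expression in $P(t)$ and $u(t)$. Fourth, the bulk of the remaining effort is algebraic: substituting $P$ and $u$, expressing everything in terms of $\rho=\mu/\lambda$ and $\delta=\lambda-\mu$, and simplifying until the ratio collapses into the compact form $\binom{n-1}{m-1} f^{m-1}/(1+f)^{n-1}$ with $f = f(\sigma,t,\rho,\delta)$ as stated.

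The main obstacle I anticipate is the second step, namely setting up the joint distribution correctly while respecting the conditioning inherent in reconstructed trees. The subtlety is that in a reconstructed tree the lineages are those with at least one surviving descendant at the present, so the ``descendant count'' probabilities at the intermediate time $\sigma t$ must already be conditioned on survival to time $t$, not merely to time $\sigma t$; mixing these two conditionings is the easiest place to introduce an error. Concretely, I would need the probability that a lineage alive at $\sigma t$ has $k$ reconstructed descendants at $t$, and I must be careful that the independence across the $m$ intermediate lineages holds only after the appropriate conditioning. Once that joint law is pinned down correctly, the appearance of the binomial coefficient and the eventual cancellation of the normalizing constant become a matter of careful but routine algebra.

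One sanity check I would carry out along the way is the boundary behaviour in $\sigma$. For $\sigma=1$ we are asking about the present itself, so we must recover $M_{\sigma,t}=n$ with probability one; correspondingly $f(\sigma=1,t,\rho,\delta)$ should blow up so that the distribution concentrates at $m=n$. For $\sigma=0$ we are at the origin, where there is exactly one lineage, so the distribution should concentrate at $m=1$, which forces $f(\sigma=0,t,\rho,\delta)=0$. Verifying these two limits of $f$ gives a quick confidence check on the algebra of the fourth step and, just as importantly, confirms that the conditioning in the second step was handled consistently.
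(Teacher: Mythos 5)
Your proposal is correct and follows essentially the same route as the paper: the paper's proof is exactly your joint/marginal decomposition written via Bayes' law, combining Nee et al.'s geometric laws for $\bP[M_{\sigma,t}=m]$, $\bP[M_{1,t}=n]$ and the per-lineage descendant distribution $(1-u((1-\sigma)t))u((1-\sigma)t)^{k-1}$ (which already carries the conditioning on survival to the present that you flag as the key subtlety), with the composition count over $m$ ancestors producing ${n-1 \choose m-1}$ and the remaining algebra collapsing to $f(\sigma,t,\rho,\delta)$. Your boundary checks ($f=0$ at $\sigma=0$, $f\to\infty$ at $\sigma=1$) are valid and consistent with the stated formula.
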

\begin{proof}
Since we are considering reconstructed trees, we obviouly have $\bP[M_{\sigma,t}=m|M_{1,t}=n]=0$ if $m>n$.
For $m \leq n$, we have with Bayes' law,
\begin{equation}
\bP[M_{\sigma,t}=m|M_{1,t}=n] = \bP[M_{1,t}=n|M_{\sigma,t}=m] \frac{\bP[M_{\sigma,t}=m]}{\bP[M_{1,t}=n]}. \label{EqnBayes}
\end{equation}
The probability that a lineage in the reconstructed tree at time $\sigma t$ has $m$ descendants today, at time $t$, is
$$P_1(\sigma,m) = (1-u((1-\sigma)t)) u((1-\sigma)t)^{m-1}$$
which is established in \citet{Nee1994}, Equation (4).
Therefore, with $N= \sum_{m=n}^{\infty} \bP[M_{1,t}=n|M_{\sigma,t}=m]$, and $\e=(1,1, \ldots,1)^T$, we get
\begin{eqnarray*}
\bP[M_{1,t}=n|M_{\sigma,t}=m] &=& \frac{1}{N} \sum_{\substack{i \in \bN \\ i^T \e = n}} \prod_{k=1}^m P_1(\sigma,i_k) \\
&=& \frac{1}{N} \sum_{\substack{i \in \bN \\ i^T \e = n}} \prod_{k=1}^m (1-u((1-\sigma)t)) u((1-\sigma)t)^{i_k-1} \\
&=& \frac{1}{N} \sum_{\substack{i \in \bN \\ i^T \e = n}} (1-u((1-\sigma)t))^m u((1-\sigma)t)^{n-m}\\
&=& \frac{1}{N} |\{i \in \bN : i^T \e = n \}| (1-u((1-\sigma)t))^m u((1-\sigma)t)^{n-m}.\\
\end{eqnarray*}
We determine $|\{i \in \bN : i^T \e = n \}|$. For every component of $i$, we have $i_k \geq 1, k = 1, \ldots, m$. So we have to count in how many ways we can distribute
the remaining $n-m$ ones to the $m$ components. Distibuting the $n-m$ ones to $m$ components is equivalent to drawing $n-m$ times from a urn with $m$ different balls
and returning the balls to the urn after a drawing.
From combinatorics, we know that there are ${n-m+m-1 \choose n-m}={n-1 \choose m-1}$ different outcomes. 
So $|\{i \in \bN : i^T \e = n \}| = {n-1 \choose m-1}$. Therefore,
$$\bP[M_{1,t}=n|M_{\sigma,t}=m] = \frac{1}{N} {n-1 \choose m-1} (1-u((1-\sigma)t))^m u((1-\sigma)t)^{n-m}.$$
In \citet{Nee1994}, the authors establish (Equation (9) and (3))
\begin{eqnarray*}
\bP[M_{\sigma,t}=m] &=& \left(1-u(\sigma t) \frac{P(t)}{P(\sigma t)}\right) \left(u(\sigma t) \frac{P(t)}{P(\sigma t)}\right)^{m-1}, \\
\bP[M_{1,t}=n] &=&  P(t)(1-u(t))u(t)^{n-1}.
\end{eqnarray*}
Plugging these equations into Equation (\ref{EqnBayes}) yields
\begin{eqnarray*}
& & \bP[M_{\sigma,t}=m]\\
&=& \frac{1}{N}{n-1 \choose m-1} (1-u((1-\sigma)t))^m u((1-\sigma)t)^{n-m}
\frac{\left(1-u(\sigma t) \frac{P(t)}{P(\sigma t)}\right) \left(u(\sigma t) \frac{P(t)}{P(\sigma t)}\right)^{m-1}}{ P(t)(1-u(t))u(t)^{n-1}} \\
&=& \frac{1}{N}{n-1 \choose m-1} \left(u(\sigma t) \frac{1-u((1-\sigma)t)}{u((1-\sigma)t)} \frac{P(t)}{P(\sigma t)}\right)^{m-1} \\
& &
u((1-\sigma)t)^{n-1} (1-u((1-\sigma)t)) \frac{\left(1-u(\sigma t) \frac{P(t)}{P(\sigma t)}\right)}{ P(t)(1-u(t))u(t)^{n-1}} \\
&=& \frac{1}{N}{n-1 \choose m-1} \left(u(\sigma t) \frac{1-u((1-\sigma)t)}{u((1-\sigma)t)} \frac{P(t)}{P(\sigma t)}\right)^{m-1} g_{\sigma,t,n} 
\end{eqnarray*}
where $g_{\sigma,t,n} = u((1-\sigma)t)^{n-1} (1-u((1-\sigma)t)) \frac{\left(1-u(\sigma t) \frac{P(t)}{P(\sigma t)}\right)}{ P(t)(1-u(t))u(t)^{n-1}}.$
In the following, we determine $N$. Since probabilities add up to $1$, we have $\sum_{m=1}^n \bP[M_{\sigma,t}=m|M_{1,t}=n]=1$.  We have with the binomial theorem,
$$N = N \sum_{m=1}^n \bP[M_{\sigma,t}=m|M_{1,t}=n] = \left(1+ u(\sigma t) \frac{1-u((1-\sigma)t)}{u((1-\sigma)t)} \frac{P(t)}{P(\sigma t)}  \right)^{n-1} g_{\sigma,t,n}.$$
Therefore,
$$ \bP[M_{\sigma,t}=m|M_{1,t}=n] = {n-1 \choose m-1} \frac{\left(u( \sigma t) \frac{1-u((1-\sigma)t)}{u((1-\sigma)t)} \frac{P(t)}{P(\sigma t)}\right)^{m-1}}{\left(1+ u(\sigma t) \frac{1-u((1-\sigma)t)}{u((1-\sigma)t)} \frac{P(t)}{P(\sigma t)}  \right)^{n-1}}$$
We evaluate
\begin{eqnarray*}
 u( \sigma t) \frac{1-u((1-\sigma)t)}{u((1-\sigma)t)} \frac{P(t)}{P(\sigma t)}
&=& (\lambda-\mu) \frac{(1-e^{-(\lambda-\mu) \sigma t})e^{-(\lambda-\mu) ((1-\sigma)t)}}{(1-e^{-(\lambda-\mu)((1-\sigma)t)})(\lambda-\mu e^{-(\lambda-\mu) t}) } 
\end{eqnarray*}
with $P(t)$ and $u(t)$ from Equation (\ref{EqnPt}) and (\ref{Eqnut}). So
$$f(\sigma,t,\rho,\delta) :=  u( \sigma t) \frac{1-u((1-\sigma)t)}{u((1-\sigma)t)} \frac{P(t)}{P(\sigma t)} =   (1-\rho)  \frac{(1-e^{- \sigma \delta t})e^{- (1-\sigma)  \delta t}}{(1-e^{-(1-\sigma)\delta t})(1-\rho e^{-\delta t}) }.$$
Therefore,
$$ \bP[M_{\sigma,t}=m|M_{1,t}=n] = {n-1 \choose m-1}  \frac{ f(\sigma,t,\rho,\delta)^{m-1}}{\left( 1+f(\sigma,t,\rho,\delta) \right)^{n-1}}$$
which establishes the theorem.


\end{proof}
 \begin{rem}
Note that $f(\sigma,t,\rho,\delta) = f(\sigma,\delta t,\rho,1)$. 
Therefore, the conditional distribution $\bP[M_{\sigma,t}=m|M_{1,t}=n]$ with parameters $\rho,\delta$ is the same as $\bP[M_{\sigma,\delta t}=m|M_{1,\delta t}=n]$ with parameters $\rho,1$.
\end{rem}

\begin{cor}
The expectation of $M_{\sigma,t}$ given $M_{1,t}=n$ is
$$ \bE[ M_{\sigma,t} |M_{1,t}=n]=\frac{1+nf(\sigma,t,\rho,\delta)}{1+f(\sigma,t,\rho,\delta)}$$
\end{cor}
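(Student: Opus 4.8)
The plan is to observe that the conditional distribution furnished by Theorem~\ref{ThmPmgivenn} is, up to a shift by one, an ordinary binomial distribution, so that its expectation follows immediately from the standard binomial mean. Write $f := f(\sigma,t,\rho,\delta)$ for brevity and substitute $k = m-1$. Then for $k \in \{0,1,\dots,n-1\}$ the theorem gives
\begin{equation*}
\bP[M_{\sigma,t}-1 = k \mid M_{1,t}=n] = {n-1 \choose k} \frac{f^{k}}{(1+f)^{n-1}} = {n-1 \choose k} p^{k}(1-p)^{n-1-k},
\end{equation*}
where $p := f/(1+f)$, so that $1-p = 1/(1+f)$. Hence, conditioned on $M_{1,t}=n$, the variable $M_{\sigma,t}-1$ is Binomial$(n-1,p)$.

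First I would invoke the well-known mean of a binomial random variable, namely $\bE[M_{\sigma,t}-1 \mid M_{1,t}=n] = (n-1)p$. Shifting back then yields $\bE[M_{\sigma,t} \mid M_{1,t}=n] = 1 + (n-1)f/(1+f)$. Finally I would put the right-hand side over the common denominator $1+f$, obtaining $\frac{(1+f)+(n-1)f}{1+f} = \frac{1+nf}{1+f}$, which is exactly the claimed formula.

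Should one prefer a self-contained derivation that avoids any appeal to the binomial mean, the alternative is to evaluate $\sum_{m=1}^{n} m {n-1 \choose m-1} f^{m-1}/(1+f)^{n-1}$ directly: write $m = (m-1)+1$, apply the absorption identity $(m-1){n-1 \choose m-1} = (n-1){n-2 \choose m-2}$ to the first piece and the binomial theorem to the second, and collect the two resulting sums. Either route is entirely routine; I do not expect any step to present a genuine obstacle, the only mild care being the bookkeeping of the index shift and the final algebraic simplification to $(1+nf)/(1+f)$.
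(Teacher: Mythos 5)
Your proposal is correct, and its main route is packaged differently from the paper's. The paper proves the corollary by brute-force summation: it writes $\bE[M_{\sigma,t}\,|\,M_{1,t}=n]=\sum_{m=1}^{n} m\,\bP[M_{\sigma,t}=m\,|\,M_{1,t}=n]$, shifts the index, splits the factor $m+1$ into $1$ plus $m$, and evaluates the two pieces with the binomial theorem together with the absorption identity $m{n-1\choose m}=(n-1){n-2\choose m-1}$ --- which is precisely the ``self-contained alternative'' you sketch at the end. Your primary argument instead makes a structural observation the paper never states: writing $p=f/(1+f)$, the conditional law of $M_{\sigma,t}-1$ given $M_{1,t}=n$ is exactly Binomial$(n-1,p)$, since $f^{k}/(1+f)^{n-1}=p^{k}(1-p)^{n-1-k}$; the claimed expectation is then the standard binomial mean plus one, and the algebra $1+(n-1)f/(1+f)=(1+nf)/(1+f)$ closes the proof. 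The two arguments are mathematically equivalent (the standard proof of the binomial mean is the paper's computation), so nothing deep separates them, but your identification buys more than brevity: it exhibits the full distributional structure, so the variance $(n-1)f/(1+f)^{2}$, higher moments, and concentration statements come for free, and it makes the theorem's normalization (probabilities summing to $1$) transparent. The paper's version buys only self-containedness. One small point worth making explicit if you write this up: to speak of a Binomial$(n-1,p)$ random variable you need $p\in[0,1]$, which holds here because $f\ge 0$ in the relevant parameter range (and in any case $\sum_{k}k{n-1\choose k}p^{k}(1-p)^{n-1-k}=(n-1)p$ is a polynomial identity in $p$, so the conclusion needs no positivity assumption at all).
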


\begin{proof}
From Theorem \ref{ThmPmgivenn}, we get
\begin{eqnarray*}
\bE[ M_{\sigma,t} |M_{1,t}=n] &=& \sum_{m=1}^{n} m  \bP[M_{\sigma,t}=m|M_{1,t}=n] \\
&=&   \frac{1}{\left( 1+f(\sigma,t,\rho,\delta) \right)^{n-1}}  \sum_{m=0}^{n-1} (m+1)  {n-1 \choose m}  f(\sigma,t,\rho,\delta)^{m} \\
&=&  \frac{1}{\left( 1+f(\sigma,t,\rho,\delta) \right)^{n-1}} \left[ (f(\sigma,t,\rho,\delta)+1)^{n-1} + \sum_{m=1}^{n-1} m  {n-1 \choose m}  f(\sigma,t,\rho,\delta)^{m}\right] \\
&=&  1 + \frac{ (n-1)f(\sigma,t,\rho,\delta) }{\left( 1+f(\sigma,t,\rho,\delta) \right)^{n-1}} \sum_{m=1}^{n-1}  {n-2 \choose m-1}  f(\sigma,t,\rho,\delta)^{m-1} \\
&=&  1 +  \frac{(n-1) f(\sigma,t,\rho,\delta)  (f(\sigma,t,\rho,\delta)+1)^{n-2}}{\left( 1+f(\sigma,t,\rho,\delta) \right)^{n-1}}  \\
&=&  \frac{1+nf(\sigma,t,\rho,\delta)}{ 1+f(\sigma,t,\rho,\delta) } 
\end{eqnarray*}
which establishes the corollary.
\end{proof}

Note that for a fixed $n$, the conditional expectation $\bE[ M_{\sigma,t} |M_{1,t}=n]$ only depends on $\rho$ and $\delta t$. For $\rho = 0, 1/4,1/2,3/4,1$, $t=10$ and varying values of $\delta$, we calculated the expectation, see Figure \ref{FigExp}. The graph looks quite unfamiliar for an LTT plot of a reconstructed tree since we have concave curves, and the Yule model is for large $\lambda$ more convex than models with extinction.

This has the following reason. Consider the curves for arbitrary $\lambda$ and $\mu=0$. We condition on the age $t$ of the tree. If $\lambda$ is very large, i.e. the process will have more than $n$ lineages at time $t$ with high probability (when not conditioning on $n$), then the most likely trees with $n$ species are the trees where nothing happens at the beginning, and later we have speciation.
If lots of speciation would happen at the beginning, we would later allow all those lineages only speciate very rarely, since we want to end up with $n$ species. This is very unlikely though, since $\lambda$ is big. If at the beginning, the one lineage does not speciate, and after a while, we would have ``normal'' speciation, this is much more likely, since we only force the first lineages to behave unnormal. This yields a very convex LTT plot.

In the case of $\lambda$ being small compared to $t$, we need the early lineages to speciate a lot. Then the later lineages can behave quite normal in order to end up with $n$ lineages today.

\begin{figure}[!h]
\begin{center}
\includegraphics[scale=0.8]{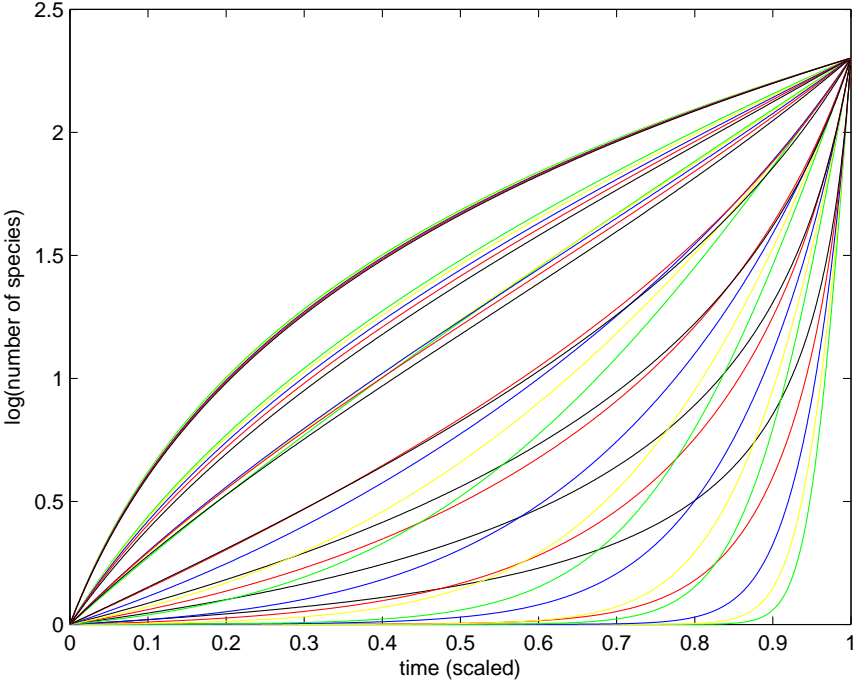}
\caption{Expected number of species given we have $n=10$ species today at time $t=10$. We calculated for $\lambda = 5,2,1,0.5,0.2,0.1,0.01$, from bottom to top. The different colours correspond to
green: $\rho = 0$, yellow: $\rho=1 / 4$, blue: $\rho = 1 /2$, red: $\rho = 3/4 $, black: $\rho = 1$.}
\label{FigExp}
\end{center}
\end{figure}

\subsection{Conditioning on the most recent common ancestor}
So far, we condition on the time of the origin of our tree.
In other situations, we might know the time of the $mrca$ of the extant species opposed to the time of origin.

Let $M^{mrca}_{\sigma,t}$ be the random variable `number of lineages in reconstructed tree at time $\sigma t$ given the time since the $mrca$ is $t$'.
\begin{cor}
For $M^{mrca}_{\sigma,t}$, we have the following conditional density,
\begin{eqnarray*}
\bP[M^{mrca}_{\sigma,t}=m |M^{mrca}_{1,t}=n ] &=& \frac{1}{n-1} \frac{f(\sigma,t,\rho,\delta)^{m-1}}{(1+f(\sigma,t,\rho,\delta)^{n-1}} \sum_{k=1}^{n-1} \sum_{l=1}^{m-1} {k-1 \choose l-1} {n-k \choose m-l}.
\end{eqnarray*}
for $m \leq n$ and $\bP[M^{mrca}_{\sigma,t}=m |M^{mrca}_{1,t}=n ] = 0$ otherwise.
\end{cor}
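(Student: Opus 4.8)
The plan is to decompose the reconstructed tree at its root. By definition, the most recent common ancestor is the unique vertex of the reconstructed tree from which exactly two edges descend, and since the time since the mrca is $t$, each of these two edges is the origin of an independent reconstructed subtree of age $t$. Writing $K$ for the number of the $n$ extant species falling in the first subtree, the second contains $n-K$, with $1\le K\le n-1$. First I would condition on $K=k$, exploit the independence of the two subtrees, and then average over $k$ against the correct prior; the whole computation is thus reduced to the origin-conditioned law already established in Theorem \ref{ThmPmgivenn}.

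The first key step is to show that, given $M^{mrca}_{1,t}=n$, the split variable $K$ is uniform on $\{1,\dots,n-1\}$, which produces the prefactor $\frac{1}{n-1}$. This follows from the origin-process law $\bP[M_{1,t}=j]=P(t)(1-u(t))u(t)^{j-1}$ recalled in the excerpt: conditioning each subtree on survival turns its tip count into a geometric variable with success probability $1-u(t)$, and conditioning the sum of two independent identical geometrics on the total $n$ cancels every factor of $u(t)$, leaving $\bP[K=k\mid n]=\frac{1}{n-1}$.

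Next, conditioned on $K=k$, the number of lineages present at time $\sigma t$ is the sum of the contributions $l$ and $m-l$ of the two subtrees, which are independent and each governed by Theorem \ref{ThmPmgivenn} with $n$ replaced by $k$ and by $n-k$. Hence
\[
\bP[M^{mrca}_{\sigma,t}=m\mid K=k]=\sum_{l}\bP[M_{\sigma,t}=l\mid M_{1,t}=k]\,\bP[M_{\sigma,t}=m-l\mid M_{1,t}=n-k].
\]
Substituting the closed form of Theorem \ref{ThmPmgivenn} makes every summand a product of two binomial coefficients times a power of $f(\sigma,t,\rho,\delta)$ divided by a power of $1+f(\sigma,t,\rho,\delta)$; crucially these powers are the same for every $k$ and $l$, so they factor out as the prefactor of the statement. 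Summing over $k$ with the uniform weight and over the admissible range of $l$ then leaves exactly the double binomial sum $\sum_{k=1}^{n-1}\sum_{l=1}^{m-1}\binom{k-1}{l-1}\binom{n-k}{m-l}$.

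The step I expect to be most delicate is the index bookkeeping at the root: one must track whether $m$ counts the mrca or its two child lineages, and correspondingly fix the shifts in the binomial coefficients and in the exponents of $f$ exactly right, including the boundary cases $\sigma=0$ (each subtree contributing a single lineage, forcing $m=2$) and the range $l\in\{1,\dots,m-1\}$. To collapse the inner sum and, just as importantly, to sanity-check the constants, I would invoke the Vandermonde-type convolution $\sum_{j}\binom{j}{a}\binom{N-j}{b}=\binom{N+1}{a+b+1}$, which evaluates the $k$-sum in closed form independently of $l$; applying $\sum_{p}p\binom{N}{p}x^{p-1}=N(1+x)^{N-1}$ to the remaining $l$-terms then confirms that $\bP[M^{mrca}_{\sigma,t}=m]$ sums to one over the admissible $m$, which is the cleanest verification that the index shifts and the normalization $\frac{1}{n-1}$ have been chosen consistently.
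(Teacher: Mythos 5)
Your proposal is correct in method and follows essentially the same route as the paper's own proof: decompose the reconstructed tree at the mrca into two independent daughter subtrees of age $t$, use that the split $K$ is uniform on $\{1,\dots,n-1\}$ (the paper cites Slowinski 1990 for this; your re-derivation from two i.i.d.\ geometric tip counts conditioned on their sum is a correct, self-contained substitute), apply Theorem~\ref{ThmPmgivenn} to each subtree, and sum over $k$ and $l$. Do carry out the normalization check you describe, because it exposes an off-by-one slip in the corollary as stated (and in the final line of the paper's proof): writing $f$ for $f(\sigma,t,\rho,\delta)$, the product of the two applications of Theorem~\ref{ThmPmgivenn} is $\binom{k-1}{l-1}\binom{n-k-1}{m-l-1}\,f^{m-2}/(1+f)^{n-2}$, not $\binom{k-1}{l-1}\binom{n-k}{m-l}\,f^{m-1}/(1+f)^{n-1}$, so your powers of $f$ and $1+f$ do \emph{not} factor out as the prefactor of the statement but as the smaller exponents. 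Only the corrected version---which by your Vandermonde identity collapses to $\frac{m-1}{n-1}\binom{n-1}{m-1}\,f^{m-2}/(1+f)^{n-2}$---sums to one over $m=2,\dots,n$, confirming that your sanity check, rather than the displayed formula, has the bookkeeping right.
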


\begin{proof}
Reconstructed trees under the constant rate birth and death process with $n$ leaves have the same distribution as Yule trees \citep{Aldous2001}. The two daughter trees are denoted by $\cT_1, \cT_2$, they are trees with origin at the $mrca$ and together they have $n$ leaves. The probability that $\cT_1$ has $k$ leaves ($k = 1,2, \ldots n-1$) is $\frac{1}{n-1}$ \citep{Slowinski1990}.  Therefore,
\begin{eqnarray*}
& & \bP[M^{mrca}_{\sigma,t}=m |M^{mrca}_{1,t}=n ]\\
 &=& \frac{1}{n-1} \sum_{k=1}^{n-1} \bP[M^{mrca}_{\sigma,t}=m |M^{\cT_1}_{1,t}=k,M^{\cT_2}_{1,t}=n-k  ]    \\
 &=& \frac{1}{n-1} \sum_{k=1}^{n-1} \sum_{l=1}^{m-1}   \bP[M^{\cT_1}_{\sigma,t}=l, M^{\cT_2}_{\sigma,t}=m-l |M^{\cT_1}_{1,t}=k,M^{\cT_2}_{1,t}=n-k  ]    \\
&=& \frac{1}{n-1} \sum_{k=1}^{n-1} \sum_{l=1}^{m-1}    \bP[M^{\cT_1}_{\sigma,t}=l |M^{\cT_1}_{1,t}=k  ]  \bP[ M^{\cT_2}_{\sigma,t}=m-l |M^{\cT_2}_{1,t}=n-k  ]    \\
 &=& \frac{1}{n-1} \frac{f(\sigma,t,\rho,\delta)^{m-1}}{(1+f(\sigma,t,\rho,\delta)^{n-1}} \sum_{k=1}^{n-1} \sum_{l=1}^{m-1} {k-1 \choose l-1} {n-k \choose m-l}
\end{eqnarray*}
which establishes the theorem.
\end{proof}

\begin{cor}
For $M^{mrca}_{\sigma,t}$, we have the following conditional expectation,
\begin{eqnarray*}
\bE[M^{mrca}_{\sigma,t} |M^{mrca}_{1,t}=n ] &=& \frac{2+n f(\sigma,t)}{ 1+f(\sigma,t) }.
\end{eqnarray*}
\end{cor}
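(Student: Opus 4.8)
The plan is to exploit the same most-recent-common-ancestor decomposition that drove the previous corollary, now combined with linearity of expectation. Writing $f := f(\sigma,t,\rho,\delta)$ for brevity, the starting point is that conditioning on the time since the $mrca$ being $t$ splits the reconstructed tree into two independent daughter trees $\cT_1,\cT_2$, each with origin at the $mrca$ and hence each an ordinary reconstructed birth-death process of age $t$; by the uniform-split result of \citet{Slowinski1990} already invoked above, $\cT_1$ carries $k$ of the $n$ leaves with probability $\frac{1}{n-1}$ for each $k \in \{1,\dots,n-1\}$.

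The key observation is that the number of lineages at time $\sigma t$ in the whole tree is simply the sum of the numbers in the two daughter trees, $M^{mrca}_{\sigma,t} = M^{\cT_1}_{\sigma,t} + M^{\cT_2}_{\sigma,t}$. First I would condition on the split size $k$ and use linearity of expectation together with independence of the daughter trees to write
\begin{eqnarray*}
\bE[M^{mrca}_{\sigma,t} |M^{mrca}_{1,t}=n ] = \frac{1}{n-1}\sum_{k=1}^{n-1}\left(\bE[M^{\cT_1}_{\sigma,t}|M^{\cT_1}_{1,t}=k] + \bE[M^{\cT_2}_{\sigma,t}|M^{\cT_2}_{1,t}=n-k]\right).
\end{eqnarray*}
Then I would invoke the single-origin corollary, which gives $\bE[M_{\sigma,t}|M_{1,t}=j] = \frac{1+jf}{1+f}$ for each daughter tree, turning the summand into $\frac{1+kf}{1+f} + \frac{1+(n-k)f}{1+f} = \frac{2+nf}{1+f}$. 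The point I would stress is that this value is \emph{independent of $k$}: the average over the $n-1$ equally likely splits is then just that same constant, so $\bE[M^{mrca}_{\sigma,t} |M^{mrca}_{1,t}=n ] = \frac{2+nf}{1+f}$ follows immediately, with no genuine summation left to carry out.

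I do not expect a serious obstacle here; the only point requiring care is the justification that, conditioned on the split, the two daughter trees really are independent reconstructed birth-death processes of age $t$ to which the earlier corollary applies verbatim. In particular the constraints $k \geq 1$ and $n-k \geq 1$ keep each argument inside the valid range, with the boundary case $j=1$ correctly giving expectation $1$. As a sanity check I would verify the two degenerate times: $f(0,t,\rho,\delta)=0$ returns the value $2$ (exactly the two lineages present at the $mrca$), while $f(1,t,\rho,\delta)\to\infty$ returns $n$ (the $n$ present-day species), both consistent with the conditioning.
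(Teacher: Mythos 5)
Your proposal is correct and follows essentially the same route as the paper's own proof: the Slowinski uniform split at the $mrca$, linearity of expectation over the two daughter trees, and substitution of the single-origin expectation $\frac{1+jf}{1+f}$, whose sum $\frac{2+nf}{1+f}$ is constant in $k$ so the average is immediate. The sanity checks at $\sigma=0$ and $\sigma=1$ are a nice addition but the argument is the same.
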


\begin{proof}
The two daugther trees $\cT_1, \cT_2$ of the $mrca$ are trees which have their origin at the $mrca$, together they have $n$ leaves. Since the probability of $\cT_1$ having $k$ leaves ($k = 1,2, \ldots n-1$) is $\frac{1}{n-1}$ \citep{Slowinski1990}, we have
\begin{eqnarray*}
\bE[M^{mrca}_{\sigma,t} |M^{mrca}_{1,t}=n ] &=& \frac{1}{n-1} \sum_{k=1}^{n-1}[ \bE[M_{\sigma,t} |M_{1,t}=k] +  \bE[M_{\sigma,t} |M_{1,t}=n-k]] \\
 &=& \frac{1}{n-1} \sum_{k=1}^{n-1}  \left( 2 +  \frac{(n-2) f(\sigma,t)}{ 1+f(\sigma,t) } \right) \\
&=& \frac{2+nf(\sigma,t)}{ 1+f(\sigma,t) } 
\end{eqnarray*}
which completes the proof.
\end{proof}

The following result had already been established in a completely different way in \citet{Nee1994}. This verifies that our calculations are correct, since we end up with the same result as \citet{Nee1994}.
\begin{cor}
The expected number of species at time $\sigma t$ condition the process survives until t is
$$\bE[M_{\sigma,t} | M_{1,t} > 0] = e^{(\lambda-\mu)\sigma t} \frac{\lambda-\mu e^{-(\lambda-\mu)t}}{\lambda-\mu  e^{-(\lambda-\mu)(1-\sigma t}}.$$ 
\end{cor}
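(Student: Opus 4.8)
The plan is to condition on the number of extant species and apply the law of total expectation, so that the computation reduces to a sum over the conditional law of $M_{1,t}$ weighted by the already-established conditional expectation $\bE[M_{\sigma,t}\mid M_{1,t}=n]$. Throughout I abbreviate $f=f(\sigma,t,\rho,\delta)$.

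First I would determine the survival probability. Summing the distribution $\bP[M_{1,t}=n]=P(t)(1-u(t))u(t)^{n-1}$ over $n\geq 1$ gives a geometric series, so that
\[
\bP[M_{1,t}>0]=\sum_{n=1}^{\infty}P(t)(1-u(t))u(t)^{n-1}=P(t).
\]
Hence $\bP[M_{1,t}=n\mid M_{1,t}>0]=(1-u(t))u(t)^{n-1}$ is an honest geometric law on $\{1,2,\dots\}$ with mean $1/(1-u(t))$.

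Next I would combine the law of total expectation with the corollary $\bE[M_{\sigma,t}\mid M_{1,t}=n]=(1+nf)/(1+f)$ to write
\[
\bE[M_{\sigma,t}\mid M_{1,t}>0]=\sum_{n=1}^{\infty}\frac{1+nf}{1+f}\,(1-u(t))u(t)^{n-1}.
\]
Splitting the summand into its constant part and its $n$-part, the first sum equals $1$ (total probability) and the second contributes $f$ times the geometric mean, which yields the compact intermediate form
\[
\bE[M_{\sigma,t}\mid M_{1,t}>0]=\frac{1}{1+f}\left(1+\frac{f}{1-u(t)}\right).
\]

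The remaining and most laborious step is the purely algebraic reduction of this expression to the stated closed form. Here I would substitute $1-u(t)=(1-\rho)e^{-\delta t}/(1-\rho e^{-\delta t})$ together with the explicit formula for $f$; writing $a=e^{-\sigma\delta t}$ and $b=e^{-(1-\sigma)\delta t}$ with $ab=e^{-\delta t}$ keeps the bookkeeping manageable. I expect the two decisive simplifications to be the factorization $1+f=(1-e^{-\delta t})(1-\rho e^{-(1-\sigma)\delta t})/[(1-e^{-(1-\sigma)\delta t})(1-\rho e^{-\delta t})]$, obtained from the numerator identity $(1-b)(1-\rho ab)+(1-\rho)(1-a)b=(1-ab)(1-\rho b)$, and the telescoping $e^{\sigma\delta t}-e^{-(1-\sigma)\delta t}=e^{\sigma\delta t}(1-e^{-\delta t})$. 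After these, the factors $(1-e^{-(1-\sigma)\delta t})$ and $(1-e^{-\delta t})$ cancel, and restoring $\rho=\mu/\lambda$ produces $e^{(\lambda-\mu)\sigma t}(\lambda-\mu e^{-(\lambda-\mu)t})/(\lambda-\mu e^{-(\lambda-\mu)(1-\sigma)t})$, which is the claimed identity. The main obstacle is entirely this simplification rather than any conceptual difficulty.
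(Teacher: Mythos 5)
Your proposal is correct and follows essentially the same route as the paper: the law of total expectation over the geometric conditional law $\bP[M_{1,t}=n\mid M_{1,t}>0]=(1-u(t))u(t)^{n-1}$, combined with the corollary $\bE[M_{\sigma,t}\mid M_{1,t}=n]=(1+nf)/(1+f)$, followed by the algebraic reduction to the closed form. The only cosmetic difference is that you invoke the mean $1/(1-u(t))$ of the geometric law directly where the paper evaluates the series $\sum_{n} n x^{n-1}$ via a derivative trick; both arrive at the same intermediate expression $\frac{1}{1+f}\bigl(1+\frac{f}{1-u(t)}\bigr)$ before the final simplification.
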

\begin{proof}
We can write the expectation as
\begin{eqnarray*}
\bE[M_{\sigma,t} | M_{1,t} > 0] &=& \sum_{n=1}^{\infty} \bE[M_{\sigma,t} | M_{1,t}=n] \bP[M_{1,t}=n | M_{1,t} > 0] \\
&=& \ldots \\
&=& \sum_{n=1}^{\infty}\frac{1+nf(\sigma,t)}{ 1+f(\sigma,t) } \lambda^{n-1} (\lambda-\mu) \frac{e^{-(\lambda-\mu)t} (1- e^{-(\lambda-\mu)t})^{n-1}}{(\lambda-\mu e^{-(\lambda-\mu)t})^n}\\
&=& \frac{(\lambda-\mu)e^{-(\lambda-\mu)t}}{(1+f(\sigma t))(\lambda-\mu e^{-(\lambda-\mu)t})}
  \sum_{n=0}^{\infty} \left( \frac{\lambda(1-e^{-(\lambda-\mu)t})}{\lambda-\mu e^{-(\lambda-\mu)t}}  \right)^n  \\
  & & + \frac{(\lambda-\mu)e^{-(\lambda-\mu)t}f(\sigma t) }{(1+f(\sigma t))(\lambda-\mu e^{-(\lambda-\mu)t})}
  \sum_{n=1}^{\infty} n \left( \frac{\lambda(1-e^{-(\lambda-\mu)t})}{\lambda-\mu e^{-(\lambda-\mu)t}}  \right)^{n-1} \\
&=& \frac{(\lambda-\mu)e^{-(\lambda-\mu)t}}{(1+f(\sigma t))(\lambda-\mu e^{-(\lambda-\mu)t})}
  \frac{1}{1- \frac{\lambda(1-e^{-(\lambda-\mu)t})}{\lambda-\mu e^{-(\lambda-\mu)t}}  } \\
  & & + \frac{(\lambda-\mu)e^{-(\lambda-\mu)t}f(\sigma t) }{(1+f(\sigma t))(\lambda-\mu e^{-(\lambda-\mu)t})}
  \frac{(\lambda-\mu e^{-(\lambda-\mu)t})^2}{\lambda (\lambda-\mu)^2 e^{-(\lambda-\mu)t}} \frac{d}{dt} \left( \frac{1}{1-\frac{\lambda(1-e^{-(\lambda-\mu)t})}{\lambda-\mu e^{-(\lambda-\mu)t}} }  \right) \\
  &=& \frac{1}{1+f(\sigma t)} + \frac{f(\sigma t) (\lambda-\mu e^{-(\lambda-\mu)t}) }{(1+f(\sigma t)) (\lambda-\mu) e^{-(\lambda-\mu)t}} \\
 &=& e^{(\lambda-\mu)\sigma t} \frac{\lambda-\mu e^{-(\lambda-\mu)t}}{\lambda-\mu  e^{-(\lambda-\mu)(1-\sigma) t}} 
\end{eqnarray*}
which establishes the corollary.
\end{proof}

\section{LTT plots for trees of unknown age}

So far, we assumed that the time since origin is known to be $t$. We then calculate the expected number of species for each point in time between the origin and today.

The fact that the time of origin is known, but nothing about the timing after that seems a bit artifical to me. Aldous/Popovic assumed that any point of time in the past is equally likely to be the point of origin of a tree. Conditioning on $n$ species than gives the distribution $q_{or}(t)$ for the time of origin. I was wondering if we want to write something why it is plausible to use the uniform assumption!?!

If the age of the tree is unknown, we can assume that the age distribution is uniform on $[0,\infty)$. This prior has been assumed before in \citet{AlPo2005, Gernhard2007}.



We will need the following theorem from \citet{GernhardBirthDeath2007}
\begin{thm} \label{ThmPtorn}
Let $t_{or}$ be the time of origin of a tree.
Let $q_{or}(t)$ be the density function of $t_{or}$. Our prior is the uniform distribution of the time of origin on $[0, \infty)$. Conditioning the tree on having $n$ species today, we obtain the following density function for the time of origin of the tree,
$$q_{or}(t|n) =  n \lambda^{n} (\lambda-\mu)^2 \frac{(1-e^{-(\lambda-\mu)t})^{n-1}  e^{-(\lambda-\mu)t}}{(\lambda-\mu e^{-(\lambda-\mu)t})^{n+1}}.$$
\end{thm}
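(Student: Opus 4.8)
The plan is to treat this as a direct Bayesian computation. Under the improper uniform prior $q_{or}(t)\propto 1$ on $[0,\infty)$, Bayes' rule gives that the posterior density of the time of origin is proportional to the likelihood of observing $n$ species today, so that
\begin{equation*}
q_{or}(t\mid n) = \frac{\bP[M_{1,t}=n]}{\int_0^\infty \bP[M_{1,s}=n]\,ds}.
\end{equation*}
The whole problem therefore reduces to writing $\bP[M_{1,t}=n]$ in closed form as a function of $t$ and then evaluating the normalizing integral in the denominator.

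First I would simplify the likelihood. Starting from the identity $\bP[M_{1,t}=n]=P(t)(1-u(t))u(t)^{n-1}$ quoted from \citet{Nee1994}, together with the definitions of $P(t)$ and $u(t)$ in (\ref{EqnPt}) and (\ref{Eqnut}), a short calculation using $1-u(t)=(\lambda-\mu)e^{-(\lambda-\mu)t}/(\lambda-\mu e^{-(\lambda-\mu)t})$ yields
\begin{equation*}
\bP[M_{1,t}=n] = \lambda^{n-1}(\lambda-\mu)^2 \frac{(1-e^{-(\lambda-\mu)t})^{n-1}e^{-(\lambda-\mu)t}}{(\lambda-\mu e^{-(\lambda-\mu)t})^{n+1}}.
\end{equation*}
Comparing with the claimed formula, the target density $q_{or}(t\mid n)$ is exactly $n\lambda$ times this expression, so it suffices to show that the normalizing integral equals $1/(n\lambda)$.

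Then I would evaluate $\int_0^\infty \bP[M_{1,s}=n]\,ds$ directly by two substitutions. Setting $y=e^{-(\lambda-\mu)s}$ turns the integral into the rational form $\lambda^{n-1}(\lambda-\mu)\int_0^1 (1-y)^{n-1}(\lambda-\mu y)^{-(n+1)}\,dy$ over the unit interval, and the further substitution $w=(1-y)/(\lambda-\mu y)$ collapses this to $\tfrac{1}{\lambda-\mu}\int_0^{1/\lambda} w^{n-1}\,dw = 1/(n\lambda^n(\lambda-\mu))$. Multiplying by the prefactor $\lambda^{n-1}(\lambda-\mu)$ gives precisely $1/(n\lambda)$, which establishes the formula.

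The one genuine subtlety, as opposed to the routine integration, is that the uniform prior on $[0,\infty)$ is improper, so the posterior is only well defined once we know the normalizing integral is finite; the explicit evaluation above is exactly what guarantees this, and the finiteness can also be seen a priori from the integrand behaving like $t^{n-1}$ near $t=0$ (hence integrable for $n\geq 1$) and decaying exponentially as $t\to\infty$. Since the result is quoted from \citet{GernhardBirthDeath2007}, I would either cite that proof or present the two-substitution computation above as a self-contained verification.
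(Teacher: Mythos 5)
Your proposal is correct, and there is nothing in the paper to compare it against: the paper does not prove this statement at all, but imports Theorem \ref{ThmPtorn} verbatim from \citet{GernhardBirthDeath2007}, so your self-contained verification is genuinely additional content. Your strategy (Bayes' rule under the improper uniform prior, reducing everything to the claim $\int_0^\infty \bP[M_{1,s}=n]\,ds = 1/(n\lambda)$) is in fact the same strategy as in the cited reference, and the computation checks out: $1-u(t)=(\lambda-\mu)e^{-(\lambda-\mu)t}/(\lambda-\mu e^{-(\lambda-\mu)t})$ indeed gives $\bP[M_{1,t}=n]=\lambda^{n-1}(\lambda-\mu)^2(1-e^{-(\lambda-\mu)t})^{n-1}e^{-(\lambda-\mu)t}(\lambda-\mu e^{-(\lambda-\mu)t})^{-(n+1)}$, which is $1/(n\lambda)$ times the stated density; the substitution $y=e^{-(\lambda-\mu)s}$ yields $\lambda^{n-1}(\lambda-\mu)\int_0^1(1-y)^{n-1}(\lambda-\mu y)^{-(n+1)}dy$; and since $w=(1-y)/(\lambda-\mu y)$ has $dw/dy=-(\lambda-\mu)/(\lambda-\mu y)^2$, this collapses to $\frac{1}{\lambda-\mu}\int_0^{1/\lambda}w^{n-1}dw=1/(n\lambda^n(\lambda-\mu))$, as you claim.

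Two small caveats. First, your integration limits ($y$ running from $1$ down to $0$, $w$ from $0$ up to $1/\lambda$) silently assume $\lambda>\mu$; if $\lambda<\mu$ the same method gives $\int_0^\infty\bP[M_{1,s}=n]\,ds=\lambda^{n-1}/(n\mu^n)$, so the stated density would integrate to $(\lambda/\mu)^n\neq 1$. The theorem is thus really a statement about the supercritical case (and degenerates for $\lambda=\mu$), a hypothesis you should state explicitly — though this imprecision is inherited from the paper, not introduced by you. Second, the cleanest rigorous treatment of the improper prior is to place a uniform prior on $[0,T]$ and let $T\to\infty$, as is done in \citet{AlPo2005} and \citet{GernhardBirthDeath2007}; your finiteness observation is the essential substance of that limit argument, but phrasing it as a limit would turn the formal Bayes step into a rigorous one.
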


Let $M_{\sigma}$ be the random variable `number of lineages in reconstructed tree when the fraction $\sigma$ of the time until today is over'.
We obtain

\begin{rem}
The probability for $m$ lineages at time $\rho t$ given $n$ species at time $t$ is
$$\bP[M_\sigma = m| M_1 = n] = \int_0^\infty \bP[M_{\sigma,t} = m| M_{1,t} = n] q_{or}(t|n) dt.$$
We did not find an analytic expression for that integral.
\end{rem}
For the expectation, we get
\begin{eqnarray}
& & \bE[M_{\sigma} | M_1=n] \\
 &=& \int_0^{\infty}  \bE[M_{\sigma,t} | M_{1,t}=n] \bP[t|n] dt \notag  \\
&=& \int_0^{\infty} \left( 1+\frac{(n-1)f(\sigma,t)}{1+f(\sigma,t)} \right) \left( n \lambda^{n} (\lambda-\mu)^2 \frac{(1-e^{-(\lambda-\mu)t})^{n-1}  e^{-(\lambda-\mu)t}}{(\lambda-\mu e^{-(\lambda-\mu)t})^{n+1}} \right) dt \notag \\
&=& 1+\int_0^{\infty} \frac{(n-1)f(\sigma,t)}{1+f(\sigma,t)} \left( n \lambda^{n} (\lambda-\mu)^2 \frac{(1-e^{-(\lambda-\mu)t})^{n-1}  e^{-(\lambda-\mu)t}}{(\lambda-\mu e^{-(\lambda-\mu)t})^{n+1}} \right) dt \notag \\
&=& 1+\int_0^{\infty} (n-1) \frac{(\lambda-\mu)(1-e^{-(\lambda-\mu) \sigma t}) e^{-(\lambda-\mu)(1-\sigma)t}  }{(1-e^{-(\lambda-\mu)t})(\lambda-\mu e^{-(\lambda-\mu)(1-\sigma )t})} \ldots \notag \\
& & \left( n \lambda^{n} (\lambda-\mu)^2 \frac{(1-e^{-(\lambda-\mu)t})^{n-1}  e^{-(\lambda-\mu)t}}{(\lambda-\mu e^{-(\lambda-\mu)t})^{n+1}} \right) dt \notag \\
&=& 1+n(n-1)\lambda^n (\lambda-\mu)^3 \int_0^{\infty} \frac{e^{-(\lambda-\mu)(2-\sigma)t}-e^{-(\lambda-\mu)2t} }{\lambda-\mu e^{-(\lambda-\mu)(1-\sigma)t} } \frac{(1-e^{-(\lambda-\mu)t})^{n-2} }{(\lambda-\mu e^{-(\lambda-\mu)t})^{n+1} } dt  \label{EqnForCBP}  \\
&\stackrel{\lambda \neq \mu}{=}& 1+n(n-1)\lambda^n (\lambda-\mu)^2 \int_0^{\infty} \frac{e^{-(2-\sigma)t}-e^{-2t} }{\lambda-\mu e^{-(1-\sigma )t} } \frac{(1-e^{-t})^{n-2} }{(\lambda-\mu e^{-t})^{n+1} } dt \label{EqnForYule}\\
&\stackrel{ \mu \neq 0}{=}& 1+n(n-1) \frac{1}{\lambda \mu} (\lambda-\mu)^2 \int_0^{\infty} \frac{e^{-(2-\sigma)t}-e^{-2t} }{\frac{\lambda}{\mu}- e^{-(1-\sigma )t} } \frac{(1-e^{-t})^{n-2} }{(1-\frac{\mu}{\lambda} e^{-t})^{n+1} } dt \notag \\
&\stackrel{\rho:=\mu / \lambda}{=}& 1+n(n-1)(\rho-1) \int_0^{\infty} \frac{e^{-(2-\sigma)t}-e^{-2t} }{1 - \rho e^{-(1-\sigma )t} } \frac{(1-e^{-t})^{n-2} }{(1- \rho e^{-t})^{n+1} } dt \label{EqnExpt}
\end{eqnarray}
Note that the expectation only depends on $\rho = \lambda / \mu$.
In general, we could not find an analytical solution for the integral. The expected LTT plots are drawn via numerical integration.
However, for the Yule model, $\mu=0$, we can evaluate the integral. From Equation (\ref{EqnForYule}), we get
\begin{eqnarray*}
\bE_{Yule}[M_{\sigma} | M_1=n] &=& 1+n(n-1) \int_0^{\infty} (e^{-(2-\sigma)t}-e^{-2t})  (1-e^{-t})^{n-2}  dt  \\
 &=& 1 + n(n-1) \sum_{k=0}^{n-2} {n-2 \choose k} (-1)^k  \int_0^\infty ( e^{-(k+2-\sigma)t} - e^{-(k+2)t )}dt \\
 &=& 1 + n(n-1)\sum_{k=0}^{n-2} {n-2 \choose k} (-1)^k  \left[-\frac{1}{k+2-\sigma} e^{-(k+2-\sigma)t}    +  \frac{1}{k+2}  e^{-(k+2)t} \right]_0^\infty\\
&=& 1 + n(n-1) \sum_{k=0}^{n-2} {n-2 \choose k} \frac{(-1)^k}{k+2} \frac{\sigma}{k+2-\sigma}.
\end{eqnarray*}
For the critical branching process, i.e. $\lambda = \mu$, we observe with the property $e^{-\epsilon} \sim 1-\epsilon$ for $\epsilon \rightarrow 0$, from Equation (\ref{EqnForCBP}),
\begin{eqnarray*}
& & \bE_{CBP}[M_{\sigma} | M_1=n]\\
 &=& \lim_{\mu \rightarrow \lambda} \left( 1 + n(n-1) \int_0^\infty \frac{\lambda^n (\lambda-\mu)^3 ((\lambda-\mu)\sigma t) ((\lambda-\mu)t)^{n-2} }{(\lambda-\mu(1-(\lambda-\mu)(1-\sigma)t)) (\lambda-\mu(1-(\lambda-\mu)t))^{n+1}} \right)\\
&=& 1 + n(n-1) \int_0^\infty \frac{\lambda^n \sigma t^{n-1}}{(1+\lambda(1-\sigma)t)(1+\lambda t)^{n+1}}dt \\
&=& 1 + n(n-1) \sigma \int_0^\infty \frac{ t^{n-1}}{(1+(1-\sigma)t)(1+ t)^{n+1}}dt.
\end{eqnarray*}
This establishes the following theorem.
\begin{thm}
The expectation of $M_{\sigma}$ given $M_{1}=n$ is
\begin{equation}
 \bE[ M_{\sigma} |M_{1}=n]  =  
 \left\{
\begin{array}{ll}
     1 + n(n-1) \sum_{k=0}^{n-2} {n-2 \choose k} \frac{(-1)^k}{k+2} \frac{\sigma}{k+2-\sigma}  & \hbox{if $\mu = 0$} \\
      1 + n(n-1) \sigma \int_0^\infty \frac{ t^{n-1}}{(1+(1-\sigma)t)(1+ t)^{n+1}}dt        & \hbox{if $\mu = \lambda$} \\
     1+n(n-1)(\rho-1) \int_0^{\infty} \frac{e^{-(2-\sigma)t}-e^{-2t} }{1 - \rho e^{-(1-\sigma )t} } \frac{(1-e^{-t})^{n-2} }{(1- \rho e^{-t})^{n+1} } dt  & \hbox{else} 
\end{array}
\right.
\end{equation}
\end{thm}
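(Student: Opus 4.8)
The plan is to derive all three cases from a single master integral obtained by the law of total expectation over the unknown time of origin. First I would write
\[
\bE[M_\sigma \mid M_1 = n] = \int_0^\infty \bE[M_{\sigma,t}\mid M_{1,t}=n]\, q_{or}(t\mid n)\, dt,
\]
inserting the conditional expectation $\bE[M_{\sigma,t}\mid M_{1,t}=n] = 1 + \frac{(n-1)f(\sigma,t,\rho,\delta)}{1+f(\sigma,t,\rho,\delta)}$ from the corollary proved above and the density $q_{or}(t\mid n)$ from Theorem~\ref{ThmPtorn}. Since $q_{or}(\cdot\mid n)$ is a probability density it integrates to one, so the constant term contributes exactly $1$. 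The key algebraic simplification is that $1+f$ factors as $(\lambda-\mu e^{-\delta(1-\sigma)t})(1-e^{-\delta t})$ divided by the same denominator that appears in $f$, so that $\frac{f}{1+f}=\delta\,\frac{(1-e^{-\delta\sigma t})e^{-\delta(1-\sigma)t}}{(\lambda-\mu e^{-\delta(1-\sigma)t})(1-e^{-\delta t})}$. Multiplying this by $q_{or}(t\mid n)$ cancels one power of $(1-e^{-\delta t})$, recombines the exponentials into $e^{-\delta(2-\sigma)t}-e^{-2\delta t}$, and after rescaling $t$ and dividing out the powers of $\lambda$ yields exactly the general integral of Equation~(\ref{EqnExpt}). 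This settles the ``else'' case directly, since no closed form is claimed there.

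For the Yule case $\mu=0$ I would set $\rho=0$ in that master integral. The factors $1-\rho e^{-(1-\sigma)t}$ and $1-\rho e^{-t}$ collapse to $1$ and the prefactor $(\rho-1)$ becomes $-1$, leaving the elementary integral $n(n-1)\int_0^\infty \big(e^{-(2-\sigma)t}-e^{-2t}\big)(1-e^{-t})^{n-2}\,dt$. Expanding $(1-e^{-t})^{n-2}$ by the binomial theorem turns the integrand into a finite sum of pure exponentials $e^{-(k+2-\sigma)t}$ and $e^{-(k+2)t}$, each of which integrates term by term; collecting the two contributions and simplifying $\frac{1}{k+2}-\frac{1}{k+2-\sigma}=-\frac{1}{k+2}\frac{\sigma}{k+2-\sigma}$ produces the claimed finite sum.

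The delicate case is the critical process $\mu=\lambda$, and this is where I expect the main obstacle. Here $\delta=\lambda-\mu\to0$, so one cannot pass to the limit inside Equation~(\ref{EqnExpt}): the rescaling that removed $\delta$ has already collapsed the very quantity tending to zero, and the prefactor and the denominator factors degenerate simultaneously into a $0\cdot\infty$ indeterminacy. The remedy is to take the limit in the earlier, un-rescaled form of Equation~(\ref{EqnForCBP}), where $\delta$ still appears explicitly. Using the linearizations $1-e^{-\delta\sigma t}\sim\delta\sigma t$, $1-e^{-\delta t}\sim\delta t$, and $\lambda-\mu e^{-\delta s}\sim\delta(1+\lambda s)$ for the relevant arguments $s$, one checks that the total power $\delta^{3+1+(n-2)}$ in the numerator cancels exactly against $\delta^{1+(n+1)}$ in the denominator, leaving $n(n-1)\int_0^\infty\frac{\lambda^n\sigma t^{n-1}}{(1+\lambda(1-\sigma)t)(1+\lambda t)^{n+1}}\,dt$, and the substitution $u=\lambda t$ removes $\lambda$ to give the stated integral. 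The one genuinely technical point is justifying the interchange of limit and integral, which I would handle by dominated convergence, exhibiting an integrable envelope bounding the integrand uniformly for $\delta$ near $0$. Once that is in place the three evaluations assemble into the piecewise formula.
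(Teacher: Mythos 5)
Your strategy coincides with the paper's: the law of total expectation over the uniform-prior age density $q_{or}(t\mid n)$, the factorization $1+f=\frac{(1-e^{-\delta t})(\lambda-\mu e^{-\delta(1-\sigma)t})}{(1-e^{-\delta(1-\sigma)t})(\lambda-\mu e^{-\delta t})}$ giving Equation (\ref{EqnForCBP}), the rescaling $t\mapsto \delta t$ giving (\ref{EqnForYule}) and (\ref{EqnExpt}), and the critical case as a limit $\mu\to\lambda$ taken in the unrescaled form (\ref{EqnForCBP}); your power count $\delta^{3+1+(n-2)}$ versus $\delta^{1+(n+1)}$ and the substitution $u=\lambda t$ are exactly the paper's computation. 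On that last point you are in fact more careful than the paper, which pushes the limit through the integral purely formally via $e^{-\epsilon}\sim 1-\epsilon$; your remark that dominated convergence is needed, and your explanation of why the limit cannot be taken inside (\ref{EqnExpt}), are correct and worth keeping.

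The genuine problem is your Yule case. You derive it by setting $\rho=0$ in the ``else'' formula, whose stated prefactor is $(\rho-1)$. Taken literally this gives
\[
1-n(n-1)\int_0^\infty\bigl(e^{-(2-\sigma)t}-e^{-2t}\bigr)(1-e^{-t})^{n-2}\,dt ,
\]
which is strictly less than $1$ (the integrand is positive for $0<\sigma<1$), so it cannot equal the claimed Yule expression, which is strictly greater than $1$. Your write-up reaches the claimed sum only through two compensating sign slips: you silently drop the factor $-1$ coming from $(\rho-1)$, and you then record $\int_0^\infty\bigl(e^{-(k+2-\sigma)t}-e^{-(k+2)t}\bigr)\,dt$ as $\frac{1}{k+2}-\frac{1}{k+2-\sigma}$, when it actually equals $\frac{1}{k+2-\sigma}-\frac{1}{k+2}=+\frac{\sigma}{(k+2)(k+2-\sigma)}$. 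The root cause is an error in the statement itself: redoing the step from (\ref{EqnForYule}), one has $\lambda-\mu e^{-s}=\lambda(1-\rho e^{-s})$, so the correct prefactor is $\lambda^{n}(\lambda-\mu)^2/\lambda^{n+2}=(1-\rho)^2$, not $(\rho-1)$; one can check independently that only $(1-\rho)^2$ makes the ``else'' case equal $n$ at $\sigma=1$. With that correction your plan is sound and even tidier than the paper's (at $\rho=0$ the prefactor is $1$ and the Yule sum follows directly), whereas the paper sidesteps the issue by deriving the Yule case from (\ref{EqnForYule}) with $\mu=0$, before $\rho$ is ever introduced. As written, however, your argument deduces the first case of the theorem from its third, and those two cases are mutually inconsistent as stated; a derivation that reconciles them by cancelling sign errors is not a proof.
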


Note that $\bE_{Yule}$ and $\bE_{CBP}$ are independent of $\lambda$. The conditioned expectation for $M_\rho$ was calculated for different values of $\rho$, see Figure \ref{FigExpt}. The integration was done with the Matlab~ode45 tool.

\begin{figure}[!h]
\begin{center}
\includegraphics[scale=0.8]{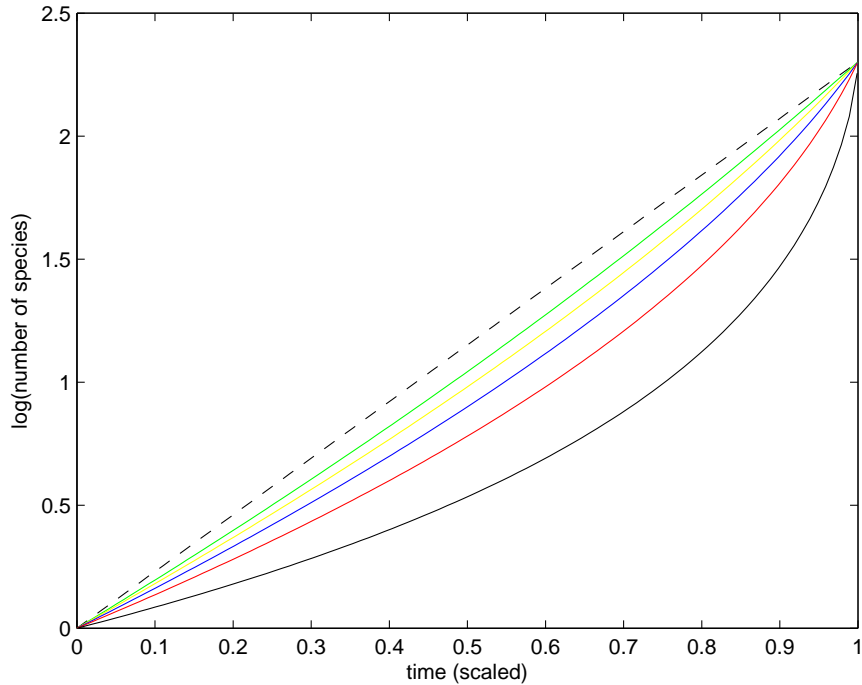}
\caption{Expected number of species given we have $n=10$ species today. According to Equation \ref{EqnExpt}, the expectation only depends on $\rho = \lambda / \mu$, we calculated $\rho=1/4, 1/2, 3/4, 1$ and the Yule model (from bottom to top). The upper black line is the straight line. Note that the Yule model is more convex than the straight line.}
\label{FigExpt}
\end{center}
\end{figure}

\appendix
\section{Proofs}

\bibliographystyle{apalike}
\bibliography{bibliography1}

\end{document}